\newtheorem{theorem}{Theorem}[section]
\newtheorem{corollary}[theorem]{Corollary}
\newtheorem{remark}[theorem]{Remark}
\newcommand{\yb}{\mathbf{y}}
\newcommand{\ud}{\mathrm{d}}
\newcommand{\ind}{1 \! \textrm{l}}
\def\reals{\mathbb{R}}
\newcommand{\fptheta}{f(\cdot\ |\ \theta)}
\newcommand{\empiric}{\mathfrak{e}_n}
\def\ps{\Theta}
\def\reals{\mathbb{R}}
\def\ss{\mathbb{X}}
\def\samplespace{\mathbb{X}}
\def\ssa{\mathscr{X}}
\def\psa{\mathscr{T}}
\def\ud{\mathrm{d}}
\def\yb{\mathbf{y}}
\def\tb{\mathbf{t}}
\def\rk{\mathbb{R}^k}
\def\reals{\mathbb{R}}
\def\ind{\mathds{1}}
\title{On uniform continuity of posterior distributions 
}
 \author{{Emanuele} {Dolera} and Edoardo Mainini
 }
 \subjclass[2010]{62F15}
 \keywords{Bayes theorem, Bayesian well-posedness, Bayesian consistency, Continuous dependence on data, exponential models}
 \address[Emanuele Dolera]{Dipartimento di Matematica ``F. Casorati'', Universit\`a di Pavia, via Ferrata 5, I-27100 Pavia, Italy
 }
 \email{emanuele.dolera@unipv.it}
\date{}
\address[Edoardo Mainini]{Dipartimento di ingegneria meccanica, energetica, gestionale e dei trasporti, Universit\`a di Genova, via all'Opera Pia 15, I-16145 Genova,  Italy 
}
\email{mainini@dime.unige.it}
\begin{document}
\maketitle

\begin{abstract}
In the setting of dominated statistical models, we provide conditions yielding strong continuity  of the posterior distribution with respect to the observed data. We show some applications, with special focus on exponential models.  
\end{abstract}

\section{Introduction} \label{sect:Intro}

We investigate the notion of \emph{well-posedness} of a Bayesian statistical inference. For a given \emph{conditional probability distribution}, we refer to well-posedness as a continuity property with respect to the conditioning variable. Indeed,  
we aim at quantitative estimates of the discrepancy between two inferences in terms of the distance between the observations. 
Our problem could be compared with the Bayesian sensitivity 
analysis by specifying that we are working under fixed prior and statistical model, the imprecision being concerned only with the data. 

Few general results are available on this topic, even if it naturally arises---sometimes as a technical tool---in connection with different Bayesian procedures, such as consistency \cite{DF}, \cite[Chapters 6-9]{GV}, mixture approximations \cite{W,RS}, deconvolution \cite{E}, inverse problems \cite{S1} and computability \cite{AFR}. While the pioneering paper \cite{Z1}, essentially inspired by foundational questions, dealt with the qualitative definition of continuity for conditional distributions, more recent studies highlight  the relevance of modulus of continuity estimates. We refer, for instance, to the well-posedness theory developed in \cite{S1} and to different results found in \cite{DS,CDRS,ILS,L}. 
Our contribution moves in the same direction.

In this work, we confine ourselves to dealing with the \emph{posterior distribution}. We introduce two measurable spaces $(\ss, \ssa)$ and $(\ps, \psa)$, representing the space of the observations and the parameters, respectively.  We further introduce a probability measure $\pi$ on $(\ps, \psa)$, the \emph{prior distribution}, and  a probability kernel $\nu(\cdot \  |\ \cdot) : \ssa \times \ps \rightarrow [0,1]$, called \emph{statistical model}. We assume that: 
\begin{itemize}
\item $\ss$ is a metric space with distance $d_\ss$  and $\ssa$ coincides with the Borel $\sigma$-algebra on $\ss$; 
\item $\ps$ is a Polish  space and $\psa$ coincides with its Borel $\sigma$-algebra;
\item the model is \emph{dominated}: $\forall\ \theta \in \Theta$, $\nu(\cdot\ |\ \theta) \ll \lambda$, for some $\sigma$-finite measure $\lambda$  on $(\ss,\ssa)$.
\end{itemize}
For any $\theta \in \ps$, we consider a density $\fptheta$ of $\nu(\cdot\ |\ \theta)$ w.r.t. $\lambda$ and we put $\rho(x) :=\int_\ps f(x\ |\ \theta)\,\pi(\ud\theta)$, since
$(x,\theta) \mapsto f(x\ |\ \theta)$ proves to be $\ssa \otimes \psa$-measurable. See \cite[Chapter 5]{K} for details. In this framework, the well-known \emph{Bayes theorem} provides an explicit form of the posterior distribution, namely 
$$
\pi(B\ |\ x) =  \dfrac{\int_B f(x\ |\ \theta) \pi(\ud\theta)}{\rho(x)}
$$ 
for any $B \in \psa$ and $x$ such that $\rho(x)>0$. Thus, the Bayes mapping $x \mapsto \pi(\cdot\ |\ x)$ can be seen as a measurable function from $\{x\in \ss\ |\ \rho(x)>0\}$ into the space  of all probability measures on $(\ps, \psa)$ endowed with the topology of weak convergence. Our main task is to find sufficient conditions on $\pi$ and $f(\cdot\ |\ \cdot)$ such that $x \mapsto \pi(\cdot\ |\ x)$ satisfies a uniform continuity condition of the following form: given a modulus of continuity $w:[0,+\infty)\to[0,+\infty)$ and a set $K\subseteq\{x\ |\ \rho(x)>0\}$, there exists a constant $L_w(K)$ such that
\begin{equation} \label{eq:main}
\ud_{TV}(\pi(\cdot\ |\ x),\pi(\cdot\ |\ y)) \leq L_{w}(K) \,w(\ud_\ss(x,y)) \quad\quad \forall\ x,y \in K
\end{equation}
holds, where $\ud_{TV}(\pi_1, \pi_2) := \sup_{B \in \psa} |\pi_1(B) - \pi_2(B)|$ denotes the \emph{total variation distance}.

In order to motivate the study of a property like \eqref{eq:main}, let us briefly discuss some of its applications to Bayesian inference. By itself, uniform continuity is of interest in the theory of regular conditional distributions \cite[Sections 9.6-9]{T}, \cite{P}. Indeed, a natural approximation of the posterior is
$$
\overline{\pi}_x(B) := \frac{\int_{B\times U_x} f(y\ |\ \theta) \pi(\ud\theta) \lambda(\ud y)} {\int_{\ps\times U_x} f(y\ |\ \theta) \pi(\ud\theta) \lambda(\ud y)},\qquad B\in\psa,
$$
where $U_x$ stands for a suitable neighborhood of $x$. Thus, \eqref{eq:main} would express the approximation error $\ud_{TV}(\pi(\cdot\ |\ x), \overline{\pi}_x)$. 
Anyway, the main applications are concerned with the theory of $n$ exchangeable observations, where $x = (x_1, \dots, x_n)$, $y = (y_1, \dots, y_n)$ and the model
$\fptheta$ is in product form, by de Finetti's representation theorem. The main advantage of an estimate like \eqref{eq:main} arises when $\ud_\ss(x,y)$ is re-expressed in terms of a sufficient statistic (e.g., the empirical measure), so that the asymptotic behavior of the posterior for large $n$ could be studied by resorting to the asymptotic behavior of such statistic. We believe that uniform continuity would represent a new technique to prove asymptotic properties of the posterior distribution, like Bayesian consistency. See Sections 3.2 and \ref{sect:consistency} below. Finally, uniform continuity would represent also a powerful technical tool to solve the problem
of approximating the posterior by mixtures, on the basis of a discretization of the sample space. See \cite{RS} and, in particular, Proposition 2 therein, where 
an estimate like \eqref{eq:main} would
allow to quantitatively determine how fine the discretization should be in order to achieve a desired degree of approximation.

%

\section{Continuous dependence on data}
In the sequel, we refer to a \emph{modulus of continuity} as a continuous strictly increasing function $w:[0,+\infty)\to[0,+\infty)$ such that $w(0) = 0$, and we consider the space  of $w$-continuous functions. In particular, we say that $g:K\subseteq\ss\to\mathbb{R}$ belongs to $ C^w(K)$ if
\[
|g|_{C^w(K)} :=\sup_{x,y\in K\\ \atop{x\neq y}} \dfrac{|g(y)-g(x)|}{w(d_\ss(x,y))} < +\infty.
\] 
If $w(r)=r^\alpha$, $\alpha\in(0,1]$, we get the class $C^{0,\alpha}(K)$ of H\"older continuous functions.

\begin{theorem} \label{thm:main} 
In the same setting of \emph{Section \ref{sect:Intro}}, suppose that $R(K):=\inf_K\rho>0$ is fulfilled for some $K\subseteq \ss$ and that, for a suitable modulus of continuity $w$, there holds
\begin{equation}\label{hypothesis}
A_{w,f,\pi,K}:=\int_{\Theta} |\fptheta|_{C^{w}(K)} \pi(\ud \theta) < +\infty.
\end{equation}
Then, \eqref{eq:main} is satisfied with $L_{w}(K) =\displaystyle\frac{A_{w,f,\pi,K}}{R(K)}$.
\end{theorem}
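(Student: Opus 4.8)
The plan is to invoke the Bayes formula to write both posteriors as measures absolutely continuous with respect to the prior $\pi$, and then to estimate the total variation distance by comparing their densities. Since $x,y\in K$ and $R(K)=\inf_K\rho>0$, we have $\rho(x),\rho(y)>0$, so the Bayes mapping is well defined at $x$ and $y$ and
\[
\pi(B\ |\ x)=\int_B\frac{f(x\ |\ \theta)}{\rho(x)}\,\pi(\ud\theta),\qquad \pi(B\ |\ y)=\int_B\frac{f(y\ |\ \theta)}{\rho(y)}\,\pi(\ud\theta),\qquad B\in\psa.
\]
The elementary identity $\ud_{TV}(\mu_1,\mu_2)=\frac12\int|p_1-p_2|\,\ud\mu$ for probability measures with densities $p_1,p_2$ against a common reference measure $\mu$ (proved by testing against $B=\{p_1\ge p_2\}$) then yields
\[
\ud_{TV}\big(\pi(\cdot\ |\ x),\pi(\cdot\ |\ y)\big)=\frac12\int_{\Theta}\left|\frac{f(x\ |\ \theta)}{\rho(x)}-\frac{f(y\ |\ \theta)}{\rho(y)}\right|\pi(\ud\theta).
\]

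Next I would split the integrand by adding and subtracting $f(y\ |\ \theta)/\rho(x)$ and using the triangle inequality:
\[
\left|\frac{f(x\ |\ \theta)}{\rho(x)}-\frac{f(y\ |\ \theta)}{\rho(y)}\right|\le\frac{|f(x\ |\ \theta)-f(y\ |\ \theta)|}{\rho(x)}+\frac{f(y\ |\ \theta)\,|\rho(y)-\rho(x)|}{\rho(x)\rho(y)}.
\]
For the first summand, I would use $\rho(x)\ge R(K)$ together with the pointwise bound $|f(x\ |\ \theta)-f(y\ |\ \theta)|\le|\fptheta|_{C^{w}(K)}\,w(\ud_\ss(x,y))$, which holds because $x,y\in K$ and, by \eqref{hypothesis}, $|\fptheta|_{C^{w}(K)}<+\infty$ for $\pi$-a.e.\ $\theta$; integrating against $\pi$ contributes $A_{w,f,\pi,K}\,w(\ud_\ss(x,y))/R(K)$. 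For the second summand, the factor $|\rho(y)-\rho(x)|/(\rho(x)\rho(y))$ is independent of $\theta$, so integrating $f(y\ |\ \theta)$ against $\pi$ gives $\rho(y)$, leaving $|\rho(y)-\rho(x)|/\rho(x)$; then $|\rho(y)-\rho(x)|\le\int_{\Theta}|f(y\ |\ \theta)-f(x\ |\ \theta)|\,\pi(\ud\theta)\le A_{w,f,\pi,K}\,w(\ud_\ss(x,y))$ and $\rho(x)\ge R(K)$ again. Adding the two contributions and dividing by $2$ gives $\ud_{TV}\le \frac{A_{w,f,\pi,K}}{R(K)}\,w(\ud_\ss(x,y))$, i.e.\ \eqref{eq:main} with the stated constant.

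The computation is essentially routine; the two points requiring a little care are the measurability of $\theta\mapsto|\fptheta|_{C^{w}(K)}$, needed for \eqref{hypothesis} to be meaningful and for the $w$-continuity bound to be used $\pi$-a.e.\ --- this follows from the separability of $K$ as a subset of the metric space $\ss$, which allows the defining supremum to be restricted to a countable dense subset of $K$ --- and the justification of the density identity for $\ud_{TV}$, which is standard once it is observed that $\pi(\cdot\ |\ x)$ and $\pi(\cdot\ |\ y)$ are both absolutely continuous with respect to $\pi$. No genuinely hard obstacle is expected.
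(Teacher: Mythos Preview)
Your proof is correct and yields exactly the stated constant. The argument is essentially the same as the paper's, the only difference being the characterization of $\ud_{TV}$ you start from: you use the $L^1$ density formula $\ud_{TV}=\tfrac12\int_\Theta\big|\tfrac{f(x\,|\,\theta)}{\rho(x)}-\tfrac{f(y\,|\,\theta)}{\rho(y)}\big|\,\pi(\ud\theta)$ and split the integrand by adding and subtracting $f(y\,|\,\theta)/\rho(x)$, whereas the paper uses the dual formulation $\ud_{TV}=\tfrac12\sup_{|\zeta|\le1}(\Phi_\zeta(x)-\Phi_\zeta(y))$ with $\Phi_\zeta(x)=\int_\Theta\zeta(\theta)\,\pi(\ud\theta\,|\,x)$ and performs the analogous add-and-subtract on $\rho\Phi_\zeta$. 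Both routes use the same two ingredients, namely $|\rho(y)-\rho(x)|\le A_{w,f,\pi,K}\,w(\ud_\ss(x,y))$ and $\int_\Theta|f(y\,|\,\theta)-f(x\,|\,\theta)|\,\pi(\ud\theta)\le A_{w,f,\pi,K}\,w(\ud_\ss(x,y))$, and both arrive at the factor $2A_{w,f,\pi,K}$ before the final division by $2$. Your version is arguably a touch more direct since it avoids the uniform-in-$\zeta$ step, but there is no substantive divergence.
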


\begin{proof} 
First of all, by assumption \eqref{hypothesis}, and since $\rho(x) =\int_\ps f(x\ |\ \theta)\,\pi(\ud\theta)$,  there holds
\begin{equation}\label{af2}\begin{aligned}
|\rho(y)-\rho(x)|
\le\int_\Theta |f(y\ |\ \theta)-f(x\ |\ \theta)|\,\pi(\ud\theta)\le A_{w,f,\pi,K} \,w(\ud_\ss(y,x))
\end{aligned}\end{equation}
for any $x,y \in K$, so that $\rho\in C^w(K)$. The dual formulation of the total variation (see, e.g., \cite{GibbsSu}) reads
$$
\ud_{TV}(\pi(\cdot\ |\ x),\pi(\cdot\ |\ y))=\frac{1}{2} \sup_{|\zeta|\le 1}\left(\int_\Theta \zeta(\theta)\,\pi(\ud\theta\ |\ x)-\int_\Theta \zeta(\theta)\,\pi(\ud\theta\ |\ y)\right)
$$
for any $x,y\in\ss$, the supremum being taken among all continuous functions $\zeta:\Theta\to\mathbb{R}$ such that $|\zeta(\theta)| \le 1$ for any $\theta \in \ps$. For any such $\zeta$, define
 $\Phi_\zeta(x):=\int_\Theta \zeta(\theta)\,\pi(\ud\theta\ |\ x)$ and note that the Bayes formula entails $\rho(x)\Phi_\zeta(x)=\int_{\Theta}\zeta(\theta)f(x\ |\ \theta)\,\pi(\ud\theta)$.
We shall prove the  $w$-continuity of the map $x\mapsto \Phi_\zeta(x)$ on $K$. First of all, this map satisfies $|\Phi_\zeta(x)|\le 1$ for any $x \in \ss$ since $|\zeta(\theta)| \le 1$.
Then, for $x,y\in K$, there holds
\begin{equation}\label{af1}
|\rho(y)\Phi_\zeta(y)-\rho(x)\Phi_\zeta(x)| = \left|\int_\Theta\zeta(\theta)(f(y \ | \ \theta)-f(x\ |\ \theta))\,\pi(\ud\theta)\right| \le A_{w,f,\pi,K}\, w(\ud_\ss(y,x)),
\end{equation}
yielding $\rho\Phi_\zeta \in C^w(K)$. Since $\rho\ge R(K)>0$ on $K$, and $|\Phi_\zeta|\le 1$, for any $x,y\in K$ we get
\begin{equation}\label{diff}
R(K)|\Phi_\zeta(y)-\Phi_\zeta(x)| \le |\rho(y)\Phi_\zeta(y)-\rho(y)\Phi_\zeta(x)|
\le |\rho(y)\Phi_\zeta(y)-\rho(x)\Phi_\zeta(x)|+|\rho(y)-\rho(x)|.
\end{equation}
For any $x,y\in K$ such that $x\neq y$, \eqref{af2}--\eqref{diff} entail
\[
R(K)\,\frac{|\Phi_\zeta(y)-\Phi_\zeta(x)|}{w(\ud_\ss(y,x))}\le \frac{|\rho(y)\Phi_\zeta(y)-\rho(x)\Phi_\zeta(x)|}{w(\ud_\ss(y,x))}+\frac{|\rho(y)-\rho(x)|}{w(\ud_\ss(y,x))}\le 2A_{w,f,\pi,K}.
\]
The latter estimate being uniform with respect to $\zeta$, we conclude that
\[
\frac{\ud_{TV}(\pi(\cdot\ |\ y),\pi(\cdot\ |\ x))}{w(\ud_\ss(y,x))}= \frac{1}{2} \sup_{|\zeta|\le 1}
\frac{|\Phi_\zeta(y)-\Phi_\zeta(x)|}{w(\ud_\ss(y,x))}\le \frac{A_{w,f,\pi,K}}{R(K)}
\]
holds for any $x, y\in K$ such thay $x\neq y$, proving the theorem. 
\end{proof}

\begin{remark}\label{>0}\rm
If $\lambda(\ss)<+\infty$, we can take $K=\ss$ in Theorem \ref{thm:main}. If $R(\ss)>0$, we get $w$-continuity on the whole  $\ss$ for the map $x\mapsto \pi(\cdot\ |\ x)$, w.r.t. $\ud_{TV}$.
\end{remark}

Some examples may also be treated within the following simple

\begin{corollary}\label{coro1} 
In the same framework of {\rm Theorem \ref{thm:main}}, take $K\subseteq\samplespace$ such that \eqref{hypothesis} holds. In addition,
suppose there exist $g:\Theta\to \mathbb{R}$ and $h:\samplespace\to\mathbb{R}$ such that  $g>0$ on $\Theta$, $\inf_K h>0$ and $f(x\ |\ \theta)\ge g(\theta)h(x)$ for any $\theta\in\Theta$ and $x\in K$. Then, \eqref{eq:main} is satisfied with $L_{w}(K) =\displaystyle A_{w,f,\pi,K} \cdot \left[ \inf_{x\in K} h(x) \,\int_\Theta g(\theta)\,\pi(\ud\theta) \right]^{-1}$.
\end{corollary}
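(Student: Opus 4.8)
The plan is to derive Corollary \ref{coro1} straight from Theorem \ref{thm:main}; the only thing to supply is a lower bound for $R(K)=\inf_K\rho$ in terms of $g$ and $h$. First I would use the pointwise inequality $f(x\ |\ \theta)\ge g(\theta)h(x)$, valid for all $\theta\in\Theta$ and $x\in K$, together with monotonicity of the integral, to get, for every $x\in K$,
\[
\rho(x)=\int_\Theta f(x\ |\ \theta)\,\pi(\ud\theta)\ \ge\ h(x)\int_\Theta g(\theta)\,\pi(\ud\theta).
\]
Taking the infimum over $x\in K$ and using $h(x)\ge\inf_K h$ on $K$ then yields
\[
R(K)=\inf_K\rho\ \ge\ \Big(\inf_{x\in K}h(x)\Big)\int_\Theta g(\theta)\,\pi(\ud\theta).
\]
Since $\inf_K h>0$ by hypothesis and $\int_\Theta g\,\pi(\ud\theta)>0$ because $g>0$ on $\Theta$ and $\pi$ is a probability measure, the right-hand side is strictly positive, hence $R(K)>0$.

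With $R(K)>0$ established and \eqref{hypothesis} holding by assumption, Theorem \ref{thm:main} applies and gives \eqref{eq:main} with constant $A_{w,f,\pi,K}/R(K)$. Because \eqref{eq:main} remains valid when $L_w(K)$ is replaced by any larger constant, the bound on $R(K)$ just obtained gives
\[
\frac{A_{w,f,\pi,K}}{R(K)}\ \le\ A_{w,f,\pi,K}\cdot\Big[\inf_{x\in K}h(x)\int_\Theta g(\theta)\,\pi(\ud\theta)\Big]^{-1},
\]
which is precisely the value of $L_w(K)$ claimed in the corollary.

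There is essentially no obstacle in this argument, which is why it is phrased as a corollary. The only points deserving a word of care are the (implicitly assumed) measurability of $g$ and $h$, needed for $\int_\Theta g\,\pi(\ud\theta)$ and $\inf_K h$ to be meaningful, and the observation that this integral, being the integral of a strictly positive function against a probability measure, is automatically positive, which is all that is required to conclude $R(K)>0$ and thereby invoke Theorem \ref{thm:main}. (If one also wants $L_w(K)$ itself to be finite, note that $\int_\Theta g\,\pi(\ud\theta)<+\infty$ follows from the finiteness of $\rho$ on $K$, i.e.\ from $\pi(\cdot\ |\ x)$ being a genuine probability measure for $x\in K$.)
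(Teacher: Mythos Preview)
Your argument is correct and is exactly the intended one: the paper states the corollary without proof, as an immediate consequence of Theorem~\ref{thm:main}, and the only missing ingredient is the lower bound $R(K)\ge \big(\inf_K h\big)\int_\Theta g\,\pi(\ud\theta)$, which you supply in the obvious way. Your side remarks on measurability and on the finiteness of $\int_\Theta g\,\pi(\ud\theta)$ are appropriate caveats but not part of the main line; the paper simply takes these for granted.
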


Let us now consider the Euclidean case, letting $\ss\subseteq\reals^d$ have nonempty interior and $K\subseteq \ss$ be an open set with Lipschitz boundary. Usually, 
a Sobolev regularity might be simpler to verify, the H\"older regularity following then by Sobolev embedding. For instance, for $p>d$, by Morrey inequality there exists a constant 
$C_{1,d,p}(K)$ such that $|g|_{C^{0,\alpha}(K)}\le C_{1,d,p}(K)| g|_{W^{1,p}({K})}$ holds for any $g\in W^{1,p}(K)$, with $\alpha=1-d/p$ and $|g|_{W^{1,p}(K)}:=\|\nabla g\|_{L^{p}({K})}$. 
More generally, if $1>s>d/p$, the fractional Sobolev embedding (see, e.g., \cite{DD}) states that 
\begin{equation} \label{Sobolev}
|g|_{C^{0,\alpha}(K)}\le C_{s,d,p}(K)|g|_{W^{s,p}({K})}
\end{equation}
holds with a suitable constant $C_{s,d,p}(K)$, $\alpha=s-d/p$ and 
$$
|g|_{W^{s,p}({K})}:=\left(\int_{{K}}\int_{{K}}\frac{|g(x)-g(y)|^p}{|x-y|^{d+sp}}\,\ud x\,\ud y\right)^{\frac1p}<+\infty
$$ 
for any $g\in L^p(K)$. We readily obtain the following

\begin{corollary} 
In the same framework of Theorem {\rm \ref{thm:main}}, let $d/p<s\le 1$ and let ${K\subseteq\ss\subseteq\reals^d}$ be an open set with Lipschitz boundary. Let
$B_{p,s,f,\pi,K}:=\int_{\Theta} |\fptheta|_{W^{s,p}(K)} \pi(\ud \theta) < +\infty$ and $R(K):=\inf_K\rho>0$. Then, for $\alpha=s-d/p$ and $w(r) = r^{\alpha}$, \eqref{eq:main} is satisfied with $L_{w}(K) = \displaystyle\frac{C_{s,d,p}(K)\,B_{p,s,f,\pi,K}}{R(K)}$.
 \end{corollary}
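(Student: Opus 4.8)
The plan is to deduce the statement directly from Theorem~\ref{thm:main}, verifying its hypothesis \eqref{hypothesis} for the specific modulus $w(r)=r^{\alpha}$ with $\alpha=s-d/p$. Observe first that $d/p<s\le1$ forces $\alpha\in(0,1)$, so $w$ is indeed an admissible modulus of continuity and, as recalled just after the definition of $C^w(K)$, one has $C^w(K)=C^{0,\alpha}(K)$ with $|\cdot|_{C^w(K)}=|\cdot|_{C^{0,\alpha}(K)}$.

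Next, I would fix $\theta\in\Theta$ and apply the fractional Sobolev embedding \eqref{Sobolev} to the function $g=\fptheta$ on the open Lipschitz set $K$ --- which is licit for $\pi$-a.e.\ $\theta$, see below --- obtaining
$$
|\fptheta|_{C^{0,\alpha}(K)}\ \le\ C_{s,d,p}(K)\,|\fptheta|_{W^{s,p}(K)} .
$$
The decisive feature is that the constant $C_{s,d,p}(K)$ depends only on $s,d,p$ and $K$, not on the function $g$, hence not on $\theta$; the bound is moreover trivially true on the $\pi$-null set of parameters for which $|\fptheta|_{W^{s,p}(K)}=+\infty$. Integrating in $\theta$ against $\pi$ and invoking the assumed finiteness of $B_{p,s,f,\pi,K}$ then yields
$$
A_{w,f,\pi,K}=\int_\Theta|\fptheta|_{C^{w}(K)}\,\pi(\ud\theta)\ \le\ C_{s,d,p}(K)\int_\Theta|\fptheta|_{W^{s,p}(K)}\,\pi(\ud\theta)\ =\ C_{s,d,p}(K)\,B_{p,s,f,\pi,K}<+\infty ,
$$
so \eqref{hypothesis} holds. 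Since also $R(K)=\inf_K\rho>0$ by assumption, Theorem~\ref{thm:main} applies with this $w$ and gives \eqref{eq:main} with $L_w(K)=A_{w,f,\pi,K}/R(K)$; because \eqref{eq:main} is an inequality, it remains valid after enlarging the constant to $C_{s,d,p}(K)\,B_{p,s,f,\pi,K}/R(K)$, which is the claimed value of $L_w(K)$.

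The substantive part of the argument is thus the single line above; the only point requiring real care is the measurability and integrability bookkeeping hidden in the definitions of $B_{p,s,f,\pi,K}$ and $A_{w,f,\pi,K}$. Concretely, one must check that $\theta\mapsto|\fptheta|_{W^{s,p}(K)}$ and $\theta\mapsto|\fptheta|_{C^{w}(K)}$ are $\psa$-measurable and that $\fptheta\in L^p(K)$ for $\pi$-almost every $\theta$ (the latter being needed to legitimately invoke \eqref{Sobolev}). Measurability of the Gagliardo seminorm follows from the joint $\ssa\otimes\psa$-measurability of $(x,\theta)\mapsto f(x\ |\ \theta)$ recalled in Section~\ref{sect:Intro}, applying Tonelli's theorem to the double integral that defines it; measurability of the H\"older seminorm follows, once the $C^{0,\alpha}$-bound is available for $\pi$-a.e.\ $\theta$, by replacing the supremum in the definition of $|\cdot|_{C^w(K)}$ with a supremum over a countable dense subset of $\{(x,y)\in K\times K:x\ne y\}$ via continuity of $\fptheta$. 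Finiteness of $B_{p,s,f,\pi,K}$ guarantees $|\fptheta|_{W^{s,p}(K)}<+\infty$ for $\pi$-a.e.\ $\theta$, and for a bounded $K$ with $sp>d$ such finiteness already forces a bounded, hence $L^p(K)$, representative, so \eqref{Sobolev} genuinely applies. I expect nothing beyond this routine verification to be needed.
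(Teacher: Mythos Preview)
Your proposal is correct and matches the paper's approach exactly: the paper gives no explicit proof, writing only ``We readily obtain the following'' after stating the embedding \eqref{Sobolev}, so the intended argument is precisely to bound $|\fptheta|_{C^{0,\alpha}(K)}$ by $C_{s,d,p}(K)|\fptheta|_{W^{s,p}(K)}$ for each $\theta$, integrate against $\pi$, and invoke Theorem~\ref{thm:main}. Your additional measurability and $L^p$-membership remarks go beyond what the paper records but are appropriate diligence.
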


\section{Examples and applications}

\subsection{Exponential models}

A remarkably interesting statistical model is the \emph{exponential family}, which includes many popular distributions, such as the Gaussian, the exponential and the gamma. For terminology and basic results about this family, see, e.g., \cite{brown}. For the sake of definiteness, we consider a $\sigma$-finite reference measure $\lambda$ on $(\ss, \ssa)$ and a measurable map $\tb : \ss \rightarrow \rk$ such that
the interior $\Delta$ of the convex hull of the support of $\lambda \circ \tb^{-1}$ is nonempty and $\Lambda := \Big\{\yb \in \rk\ \Big|\ \int_{\ss} e^{\yb \cdot \tb(x)} 
\lambda(\ud x) < +\infty\Big\}$ is a nonempty open subset of $\rk$. 
As for $\fptheta$, we resort to the so-called \emph{canonical parametrization}, by which $\ps = \Lambda$, $\theta = \yb$,
$$f(x\ |\ \theta) = e^{\theta \cdot \tb(x) - M(\theta)},\quad M(\theta) := \log\left(\int_{\ss} e^{\theta \cdot \tb(x)} \lambda(\ud x)\right)$$ and, for any $\theta \in \ps$, 
$\fptheta$ is a probability density function w.r.t. $\lambda$. Now, given a prior $\pi$ on $(\ps,\psa)$ and a set $K$ compactly contained in the interior of $\ss$, we observe that
$$R(K):=\inf_{x\in K}\rho(x)\ge \int_\Theta \inf_{x\in K} f(x\ |\ \theta)\,\pi(\ud\theta) = \int_{\Theta}e^{\inf_{x\in K}\theta\cdot \tb(x)}e^{-M(\theta)}\,\pi(\ud\theta),$$
where the last term is positive if $\tb$ is continuous. On the other hand, we have $$\int_{\Theta} |f(\cdot\ |\ \theta)|_{C^w(K)}\,\pi(\ud\theta) \le |\tb|_{C^w(K)} \int_\Theta  |\theta| e^{\sup_{z\in t(K)}\theta\cdot\yb}\, e^{-M(\theta)}\,\pi(\ud\theta).$$ Therefore, if we suppose $\tb\in C^w(K)$ and that the integral $\int_\Theta  |\theta| e^{\sup_{z\in t(K)}\theta\cdot\yb}\, e^{-M(\theta)}\,\pi(\ud\theta)$ is finite, we can invoke Theorem \ref{thm:main} to obtain the  $w$-continuity on $K$ of the posterior distribution.

We finally notice that, in connection with an exponential model, it is natural to choose a \emph{conjugate prior}, yielding an explicit form of the posterior \cite{DY}. Thus, the LHS of \eqref{eq:main} can be directly computed, claiming a fair comparison with the RHS. Actually, nothing seems lost at the level of the modulus of continuity, though our constant $L_w(K)$ is usually sub-optimal. To illustrate this phenomenon, we can take $f(x\ |\ \theta) = \theta e^{-\theta x}$, with $x \in \ss = [0,+\infty)$ and $\theta \in \Theta = (0, +\infty)$. Chosen a conjugate prior like $\pi(\ud\theta) = e^{-\theta}\ud\theta$, we observe that \eqref{eq:main} holds with $K = [0,M]$, for any $M>0$, and $w(r)=r$. But our constant $L_w(K)$ behaves asymptotically like $M^2$ for large $M$, whilst the optimal one remains bounded as $M$ grows.

\subsection{Exponential models for $n$ exchangeable observations} \label{sect:nexchangeable}

Here, we adapt the result of Section 3.1 to the $n$-observations setting, assuming exchangeability. In this case $x=(x_1,\ldots x_n)\in\mathbb{X}^n$ and the statistical model is of the form
$(\ss^n,\ps)\ni (x,\theta)\mapsto f(x\ | \ \theta)=\prod_{i=1}^{n} \tilde f(x_i \ | \ \theta)$
for some density $ \tilde f(\cdot \ | \ \cdot): \ss\times \ps\to \mathbb{R}$. If $\tilde f$ belongs to the exponential family considered in Section 3.1, we have
$\prod_{i=1}^n \tilde f(x_i\ |\ \theta) = \exp\big\{\theta \cdot \big(\sum_{i=1}^n \tb(x_i)\big) - nM(\theta)\big\}.$ By Neyman's factorization lemma, we rewrite the model as $$f(x\ |\ \theta)= \exp\{\tau_n(\theta) \cdot \bar{\tb}_n(X_n(x)) - \bar M_n(\tau_n(\theta))\},$$
for suitable functions $X_n:\mathbb{X}^n\to\ss^k$, $\tau_n:\Theta\to \mathbb{R}^k$, $\bar{\tb}_n:\ss^k\to\mathbb{R}^k$, $\bar M_n:\mathbb{R}^k\to\mathbb{R}$, with $X_n$ symmetric
and $k$ standing for the dimension of $\Theta$. We can recast the statistical model by considering $\tau_n$ as the new parameter, and $X_n$ as the observable. Indeed, we introduce
$$g(X,\tau)=\exp\{\tau \cdot \bar{\tb}_n(X)) - \bar Q_n(\tau)\}\,\exp\{\bar Q_n(\tau) - \bar M_n(\tau)\},$$
where $e^{\bar Q_n(\tau)}:=\int_{\ss^k} e^{\tau\cdot \bar{\tb}_n(Z)}\lambda^{\otimes k}(dZ)$ and $\lambda$ is the reference measure on $\ss$.
Letting $\varphi_n(\tau):=\exp\{\bar Q_n(\tau) - \bar M_n(\tau)\}$, we have $g(X,\tau)=h(X\ | \ \tau)\varphi_n(\tau)$, where $h(\cdot \ | \ \tau)$ is a probability density with respect to the product measure $\lambda^{\otimes k}$, parametrized by $\tau\in \mathbb{R}^k$. Given a prior $\pi$ on $(\ps,\psa)$, the posterior $\pi_n(\ud\theta\ | \ x)$ reads
\[
\dfrac{\prod_{i=1}^{n} \tilde f(x_i \ | \ \theta)\,\pi(\ud\theta)}{\int_\Theta \prod_{i=1}^n\tilde f(x_i\ | \ t)\pi(\ud t)}
=\dfrac{h(X_n(x) \ | \ \tau_n(\theta))\,\varphi_n(\tau_n(\theta))\,\pi(\ud\theta)}{\int_\Theta h(X_n(x)\ | \ \tau_n(t))\,\varphi_n(\tau_n(t))\,\pi(\ud t)}=
\dfrac{h(X_n(x) \ | \ \tau_n(\theta))\,\bar\pi_n(\ud\theta)}{\int_\Theta h(X_n(x)\ | \ \tau_n(t))\,\bar\pi_n(dt)},
\]
where $\bar\pi_n(\ud\theta):=\displaystyle\frac{\varphi_n(\tau_n(\theta))\,\pi(\ud\theta)}{\int_\Theta \varphi_n(\tau_n(t))\,\pi(\ud t)}$, provided that the denominator is finite. Then, if Theorem \ref{thm:main} can be applied in terms of the new model $h$ and prior $\bar \pi_n$, the thesis \eqref{eq:main} reads
\[
\ud_{TV}(\pi_n(\cdot \ | \ x),\pi_n(\cdot \ | \ y))\le L_w(K)\, w(\ud_{\ss^k}(X_n(x),X_n(y)))\qquad \forall x,y\in K,
\]
 where $\ud_{\ss^k}$ denotes the product distance. We stress that a bound in terms of  $\ud_{\ss^k}(X_n(x),X_n(y))$ is statistically more meaningful than a bound in terms of $d_{\ss^n}(x,y)$, as the former agrees with the symmetry assumption coming from exchangeability.

\subsection{Global regularity for models with $\ss \subset \mathbb R^d$.}

When $\lambda(\ss) < +\infty$, we can check whether Theorem \ref{thm:main} holds with $K = \ss$, yielding a global uniform continuity. We discuss the case $\ss \subset \mathbb R^d$ with $\lambda = \mathscr{L}^d$, the $d$-dimensional Lebesgue measure, forcing $\ss$ to be bounded. Many popular models do not satisfy the assumptions of Theorem \ref{thm:main} with $K = \ss$, but only with some $K$ compactly contained in the interior of $\ss$. This is the case of Beta and Dirichlet models. On the other hand, a model that fits the assumptions of 
Corollary \ref{coro1} is the Bradford distribution, given by 
$$
f(x\ |\ \theta):=\frac{\theta}{(1+\theta x)\,\log(1+\theta)}
$$ 
with $x\in\samplespace:= [0,1]$ and $\theta \in \Theta:=(-1,+\infty)$. Such a model is used for the description of the occurencies of references in a set of documents on the same subject \cite{L1}. Choosing 
\[
g(\theta):=\left\{
\begin{array}{ll}\medskip
\dfrac{\theta}{\log(1+\theta)}\quad&\mbox{ if $\theta\in(-1,0]$}\\
\dfrac{\theta}{(1+\theta)\log(1+\theta)}\quad&\mbox{ if $\theta\in(0,+\infty)$}\ ,
\end{array}
\right.
\]
Corollary \ref{coro1} entails global Lipschitz-continuity, i.e.\! \eqref{eq:main} with $L_w(\ss) = \displaystyle\frac{\int_{\Theta} C_1(\theta)\,\pi(\ud\theta)}{\int_\Theta g(\theta)\,\pi(\ud\theta)}$ and $w(r)=r$, where $C_1(\theta):=\sup_{\substack{x\in\samplespace}}|\partial_xf(x\ |\ \theta)|$, provided that  $\pi$ satisfies
$\int_\Theta C_1(\theta)\,\pi(\ud\theta)<+\infty$.

\subsection{Infinite-dimensional models}

One of the merits of our approach consists in the fact that we can handle also complex statistical models of non parametric type. Two noteworthy examples in Bayesian analysis are the \emph{infinite dimensional exponential family} and the \emph{infinite mixture models}. See \cite{GN} for a comprehensive treatment.  

As for the first model, keeping in mind the Karhunen-Lo\'eve theorem \cite[Chapter 13]{K}, we confine ourselves to considering densities of the form
$f(x\ |\ \theta) = e^{\theta(x)} \left(\int_0^T e^{\theta(y)} \ud y\right)^{-1}$
with $\ss = [0,T]$, with a fixed $T > 0$ and $\ps = \mathrm{C}([0,T];\reals)$. See also \cite{L3}. After fixing a prior $\pi$, we show how Theorem \ref{thm:main} can be applied.
First, we deal with the condition on the infimum of $\rho$ with $K = \ss$. In fact, we have $\inf_{x \in \ss} f(x\ |\ \theta) \geq \frac{1}{T} e^{-r_T(\theta)}$, 
where $r_T(\theta) := \sup_{x \in \ss} \theta(x) - \inf_{x \in \ss} \theta(x)$ denotes the \emph{range} of the (random) trajectory $\theta$. Whence,
$$
R(\ss) \geq \frac{1}{T} \int_{\ps} e^{-r_T(\theta)} \pi(\ud\theta)=\frac1T \int_{0}^{+\infty} e^{-s}F_{r_T}(s)\,ds>0\ ,
$$
where $F_{r_T}$ stands for the density of $r_T$ with respect to the Lebesuge measure. To check \eqref{hypothesis}, we consider a H\"older condition with exponent 
$\gamma \in (0,1)$. Thus, we note that 
$$|\fptheta|_{C^{0,\gamma}(\ss)} \leq \frac{1}{T} e^{r_T(\theta)} |\theta|_{C^{0,\gamma}(\ss)}.$$
By H\"older's inequality, for $p>1$ and $\frac1p+\frac1q=1$, we write 
$$
\int_\Theta|\fptheta|_{C^{0,\gamma}(\ss)}\,\pi(d\theta) \leq \frac1T\left(\int_\Theta|\theta|^p_{C^{0,\gamma}([0,T])}\,\pi(d\theta)\right)^{\frac1 p}\left(\int_0^{+\infty} e^{q s} F_{r_T}(s)\,ds\right)^{\frac1q}\ .
$$
By the fractional Sobolev inequality \eqref{Sobolev}, for $s=\frac{\gamma p+1}{p}$ we have
\begin{equation}\label{long}
\begin{aligned}
\int_\Theta|\theta|^p_{C^{0,\gamma}([0,T])}\,\pi(d\theta) &\le C^p_{s,1,p}(\ss)\int_\Theta \int_0^T\!\!\int_0^T\frac{|\theta(x)-\theta(y)|^p}{|x-y|^{1+sp}}\,\ud x\,\ud y\,\pi(\ud \theta)\\
&=C^p_{s,1,p}(\ss)\!\int_0^T\!\!\int_0^T\!\! \frac{1}{|x-y|^{1+sp}}\left(\int_{\reals^2} |u-v|^p\,h_{x,y}(u,v)\,\ud u\,\ud v\right)\ud x\,\ud y,
\end{aligned}
\end{equation}
where $h_{x,y}$ is the \emph{two-times density} of $\theta$. Typically, the Kolmogorov-Chentsov condition \cite[Chapter 3]{K}
\begin{equation}\label{KC}
\int_{\reals^2} |u-v|^p\,h_{x,y}(u,v)\,du\,dv \le Q(p,\lambda)|x-y|^{1+\lambda}
\end{equation}
holds for some $\lambda>0$ and some $Q(p,\lambda)>0$. See, e.g., \cite{K}. If \eqref{KC} is verified for $\lambda>sp-1=\gamma p$, then the last term of \eqref{long} is finite. Summing up, if there exists $q>1$ such that $$Z_q(r_T):=\left(\int_0^{+\infty} e^{q s}F_{r_T}(s)\,\ud s\right)^{\frac 1q}<+\infty,$$ and if \eqref{KC} holds for some $\lambda>0$ and 
$p = q/(q-1)$, then, as soon as $\gamma<\min\{1-\frac1p,\frac\lambda p\}$, \eqref{eq:main} holds with $w(r)=r^\gamma$
and 
$$
A_{w,f,\pi,\ss} = \frac1T\, C_{s,1,p}(\ss)\, Z_q({r_T})\left( Q(p,\lambda)\,\int_0^T\int_0^T|x-y|^{\lambda-\gamma p-1}\,\ud x\,\ud y \right)^{\frac1p}\ .
$$
The case of $\pi$ equal to the Wiener measure deserves some attention. Indeed, \eqref{KC} is satisfied with $\lambda=p/2-1$ and $p>2$ \cite[Chapter 13]{K}.  Moreover, we have (see \cite{F}) 
$$
F_{r_T}(s)=\frac{8}{\sqrt{2\pi}}\sum_{k=1}^{+\infty}(-1)^k k^2 \exp{\left\{-\frac{k^2s^2}{2T}\right\}}\ ,
$$ 
yielding $Z_q(r_T) < +\infty$ for all $q \in \reals$. Therefore, we obtain H\"older continuity of the posterior distribution for any exponent $\gamma<\frac12$.

The second model of interest, namely the so-called infinite dimensional mixture model, is based on a family of densities of the form
$f(x\ |\ \theta) = \int_{\reals} \kappa(x; t) \theta(\ud t)$, with $\ss = \reals$ and $\ps$ equal to the space of all probability measures $\mathcal{P}(\reals)$ on $(\reals, \mathscr{B}(\reals))$. The kernel
$\kappa$ consists of a family of densities (in the $x$-variable) parametrized by $t \in \reals$. A noteworthy case of interest is the Gaussian kernel $\kappa(x; t) = \frac{1}{\sqrt{2\pi}}
\exp\{-\frac 12 (x-t)^2\}$. Now, after fixing a prior $\pi$ of nonparametric type (e.g. the Ferguson-Dirichlet prior), the application of Theorem \ref{thm:main} is straightforward.
First, for kernels in the form $\kappa(x; t) = \kappa(x-t)$, condition \eqref{hypothesis} holds even independently of $\pi$, provided that 
$\sup_{x \in \reals} |\kappa'(x)| < +\infty$. For the condition on $R(K)$, for some compact $K \subset \reals$, it is enough to assume that $\inf_{x \in K} \int_{\reals} \kappa(x; t) \overline{\theta}(\ud t) > 0$, where $\overline{\theta}(B) := \int_{\mathcal{P}(\reals)} \theta(B) \pi(\ud\theta)$, $B \in \mathscr{B}(\reals)$.

\subsection{Application to Bayesian consistency} \label{sect:consistency}

We have seen in Section \ref{sect:nexchangeable} that, in presence of exchangeable observations, the posterior can be written as
$$
\pi_n(\ud\theta\ |\ x) = \frac{\prod_{i=1}^n \tilde{f}(x_i\ |\ \theta)}{\int_{\ps} \prod_{i=1}^n \tilde{f}(x_i\ |\ t) \pi(\ud t)} \pi(\ud\theta) =
\frac{\exp\{ n\int_{\ss} \log\tilde{f}(y\ |\ \theta) \empiric^x(\ud y)\} }{\int_{\ps} \exp\{ n\int_{\ss} \log\tilde{f}(y\ |\ t) \empiric^x(\ud y)\} \pi(\ud t)} \pi(\ud\theta)
$$
where $x = (x_1, \dots, x_n)$ and $\empiric^x(\cdot) := \frac 1n \sum_{i=1}^n \delta_{x_i}(\cdot)$ denotes the \emph{empirical measure}. In the theory of Bayesian consistency, 
one fixes $\theta_0 \in \ps$ and generates from $\nu(\cdot\ |\ \theta_0)$
a sequence $\{\xi_i\}_{i \geq 1}$ of i.i.d. random variables. The objective is to prove that 
the posterior piles up near the true value $\theta_0$, i.e. $\ud\big(\pi(\cdot\ |\ \xi_1, \dots, \xi_n), \delta_{\theta_0}\big) \rightarrow 0$ in probability, for some weak distance 
$\ud$ (e.g., Prokhorov or bounded-Lipschitz metric \cite{GibbsSu}) between probability measures on $(\ps, \psa)$, possibly with an estimation of the convergence rate. To establish a link with our theory, we introduce the probability kernel
$$
\pi_n^*(\ud\theta\ |\ \mu) :=  \frac{\exp\{ n\int_{\ss} \log\tilde{f}(y\ |\ \theta) \mu(\ud y) - M_n(\theta) \}}{\int_{\ps} \exp\{ n\int_{\ss} \log\tilde{f}(y\ |\ t) \mu(\ud y) - M_n(t)\} \pi(\ud t)} 
e^{M_n(\theta) - \overline{M}_n} \pi(\ud\theta)
$$ 
where $\mu \in \mathcal{M}$, a subset of probability measures containing in its closure both $\nu(\cdot\ |\ \theta_0)$ and $\empiric^{(\xi_1, \dots, \xi_n)}(\cdot)$, with 
$$
M_n(\theta) := \log\left\{ \int_{\mathcal{M}} \exp\left\{ n\int_{\ss} \log\tilde{f}(y\ |\ \theta) \mu(\ud y)\right\} \eta(\ud\mu) \right\}
$$ 
for some measure $\eta$ on $\mathcal{M}$, and $\overline{M}_n := \log\left\{ \int_{\ps} e^{M_n(\theta)} \pi(\ud\theta) \right\}$. In this notation, $\pi_n(\ud\theta\ |\ x) = \pi_n^*(\ud\theta\ |\ \empiric^x)$. Whence,
$$
\ud\big(\pi(\cdot\ |\ \xi_1, \dots, \xi_n), \delta_{\theta_0}\big) \leq \ud\big(\pi_n^*(\cdot\ |\ \nu(\cdot\ |\ \theta_0)), \delta_{\theta_0}\big) +
\ud\big( \pi_n^*(\cdot\ |\ \empiric^{(\xi_1, \dots, \xi_n)}), \pi_n^*(\cdot\ |\ \nu(\cdot\ |\ \theta_0)) \big)\ .
$$
As for the first term on the RHS, convergence to zero is well-known with explicit rates, as a consequence of the so-called Kullback-Leibler property \cite[Definition 6.15]{GV}. The second term on the RHS can be studied under expectation, by splitting it as follows:
\begin{equation} \label{expectation}
\mathds{E}_{\theta_0}\left[\ud\big( \pi_n^*(\cdot\ |\ \empiric), \pi_n^*(\cdot\ |\ \nu(\cdot\ |\ \theta_0)) \big) \ind_{E_n} \right] + \mathds{E}_{\theta_0} \left[\ud\big( \pi_n^*(\cdot\ |\ \empiric), \pi_n^*(\cdot\ |\ \nu(\cdot\ |\ \theta_0)) \big) \ind_{E_n^c}\right] 
\end{equation}
with $E_n := \{D(\empiric^{(\xi_1, \dots, \xi_n)},  \nu(\cdot\ |\ \theta_0)) \leq \epsilon_n\}$, where $\{\epsilon_n\}_{n \geq 1}$ is a vanishing sequence of positive numbers and
$D$ a weak distance (e.g., Prokhorov or bounded-Lipschitz metric \cite{GibbsSu}) between probability measures on $(\ss, \ssa)$. If the distance $\ud$ is bounded, the second term in \eqref{expectation} is handled in terms of 
$\mathds{P}_{\theta_0}[D(\empiric,  \nu(\cdot\ |\ \theta_0)) > \epsilon_n]$, and hence resorting to well-known large deviations inequalities for empirical processes \cite[Chapter 27]{K}. Finally, if $\ud \leq \ud_{TV}$ (see \cite{GibbsSu}), we can study the first term in \eqref{expectation} by applying Theorem \ref{thm:main}, with $K = \{\mu \in \mathcal{M}\ |\ 
D(\mu, \nu(\cdot\ |\ \theta_0)) \leq \epsilon_n\}$ and $w(r) = r^{\alpha}$ for some $\alpha \in (0,1]$. The role of the local H\"older continuity is now functional  to reducing the 
analysis of the first term in \eqref{expectation} to that of $\mathds{E}_{\theta_0}\big[ D(\empiric^{(\xi_1, \dots, \xi_n)},  \nu(\cdot\ |\ \theta_0))^{\alpha} \big]$, whose rates of contraction are well-known \cite{FG}.

\section*{Acknowledgements}
ED received funding from the European Research Council (ERC) under the European Union's Horizon 2020 research and innovation programme under grant agreement No 817257. This research was also supported by the Italian Ministry of Education, University and Research (MIUR) under ``PRIN project" grant No 2017TEXA3H, and ``Dipartimenti di Eccellenza Program" (2018--2022) - Dept. of Mathematics ``F. Casorati", University of Pavia.


\end{document}